\def\frk{\frak}               
\def\mm{{\frk m}}
\def\Phi{{\frk n}}
\def\Phi{{\frk N}}
\def\opn#1#2{\def#1{\operatorname{#2}}} 
\opn\chara{char} \opn\length{\ell} \opn\pd{pd} \opn\rk{rk}
\opn\projdim{proj\,dim} \opn\injdim{inj\,dim} \opn\rank{rank}
\opn\depth{depth} \opn\sdepth{sdepth} \opn\fdepth{fdepth}
\opn\grade{grade} \opn\height{height} \opn\embdim{emb\,dim}
\opn\codim{codim}  \opn\min{min} \opn\max{max}
\opn\Tr{Tr} \opn\bigrank{big\,rank}
\opn\superheight{superheight}\opn\lcm{lcm}
\opn\trdeg{tr\,deg}
\opn\reg{reg} \opn\lreg{lreg} \opn\ini{in} \opn\lpd{lpd}
\opn\size{size}
\opn\div{div} \opn\Div{Div} \opn\cl{cl} \opn\Cl{Cl}
\opn\Spec{Spec} \opn\Supp{Supp} \opn\supp{supp} \opn\Sing{Sing}
\opn\Ass{Ass} \opn\Min{Min}
\opn\Ann{Ann} \opn\Rad{Rad} \opn\Soc{Soc}
\opn\Im{Im} \opn\Ker{Ker} \opn\Coker{Coker} \opn\Am{Am}
\opn\Hom{Hom} \opn\Tor{Tor} \opn\Ext{Ext} \opn\End{End}
\opn\Aut{Aut} \opn\id{id}  \opn\deg{deg}
\opn\nat{nat}
\opn\pff{pf}
\opn\Pf{Pf} \opn\GL{GL} \opn\SL{SL} \opn\mod{mod} \opn\ord{ord}
\opn\Gin{Gin} \opn\Hilb{Hilb}
\opn\aff{aff} \opn\con{conv} \opn\relint{relint} \opn\st{st}
\opn\lk{lk} \opn\cn{cn} \opn\core{core} \opn\vol{vol}
\opn\link{link} \opn\star{star}
\opn\gr{gr}
\def\pot#1#2{#1[\kern-0.28ex[#2]\kern-0.28ex]}
\opn\dirlim{\underrightarrow{\lim}}
\opn\inivlim{\underleftarrow{\lim}}
\let\to=\rightarrow
\def\Implies{\ifmmode\Longrightarrow \else
        \unskip${}\Longrightarrow{}$\ignorespaces\fi}
\def\implies{\ifmmode\Rightarrow \else
        \unskip${}\Rightarrow{}$\ignorespaces\fi}
\def\iff{\ifmmode\Longleftrightarrow \else
        \unskip${}\Longleftrightarrow{}$\ignorespaces\fi}
\newtheorem{Theorem}{Theorem}[]
\newtheorem{Lemma}[Theorem]{Lemma}
\newtheorem{Corollary}[Theorem]{Corollary}
\newtheorem{Proposition}[Theorem]{Proposition}
\theoremstyle{definition}
\newtheorem{Remark}[Theorem]{Remark}
\newtheoremstyle{subsection-tweak}
   {11pt}
   {3pt}%
   {}
   {}%
   {\bfseries}
   {}%
   {.5em}
   {\thmnumber{\@{#1}{}\@{#2}.}%
    \thmnote{~{\bfseries#3.}}}    
\newcounter{numberingbase}
\theoremstyle{subsection-tweak}
\newtheorem{bpp}[Theorem]{}
\newtheorem{bppt}[numberingbase]{}
\newcommand{\bbpp}{\begin{bpp}}
\newcommand{\eepp}{\end{bpp}}
\newcommand{\bbppt}{\begin{bppt}}
\newcommand{\eeppt}{\end{bppt}}
\theoremstyle{theorem}
\theoremstyle{definition}
\newcommand{\val}{\mathrm{val}}		
\let\epsilon\varepsilon
\let\phi=\varphi
\def\qed{\ifhmode\textqed\fi
      \ifmmode\ifinner\quad\qedsymbol\else\dispqed\fi\fi}
\def\textqed{\unskip\nobreak\penalty50
       \hskip2em\hbox{}\nobreak\hfil\qedsymbol
       \parfillskip=0pt \finalhyphendemerits=0}
\def\dispqed{\rlap{\qquad\qedsymbol}}
\opn\dis{dis}
\def\pnt{{\raise0.5mm\hbox{\large\bf.}}}
\opn\Lex{Lex}
\begin{document}

\title{Immediate extensions of valuation rings and ultrapowers}

\author{Dorin Popescu}
\address{Simion Stoilow Institute of Mathematics of the Romanian Academy, Research unit 5, University of Bucharest, P.O. Box 1-764, Bucharest 014700, Romania}

\maketitle

\begin{abstract}
We describe the immediate extensions of a one dimensional valuation ring $V$ which could be embedded in some separation of a ultrapower of $V$ with respect to a certain ultrafilter. For such extensions holds a kind of Artin's approximation. 

 {\it Key words } : immediate extensions,pseudo convergent sequences, pseudo limits, ultrapowers, smooth morphisms, Artin approximation.   \\
 {\it 2010 Mathematics Subject Classification: Primary 13F30, Secondary 13A18,13L05,13B40.}
\end{abstract}

Let $(R,\mm)$ be a Noetherian local ring and $\tilde R$ the ultrapower of $R$ with respect of a non principal ultrafilter on $\bf N$. Then ${\bar R}={\tilde R}/\cap_{n\in {\bf N}} \mm^n{\tilde R}$ is a Noetherian complete local ring which is flat over $R$ (see \cite[Proposition 2.9]{P0}, or \cite[Theorem 2.5]{P1}). Here we try to find an
analogue result in the frame of valuation rings.

Let $V$ be  a  valuation ring  with value group $\Gamma$ containing its residue field $k$, $K$ its fraction field and ${\tilde V}= \Pi_{\mathcal{U}}V$ the ultrapower of $V$ with respect to an ultrafilter $\mathcal{U}$ on a set $U$  (see \cite{CK}, \cite{Scho}, \cite{ADH}). Then 
$${\bar V}={\tilde V}/\cap_{z\in V, z\not =0} z{\tilde V}$$  
is a valuation ring extending $V$, a kind of separation of $\tilde V$. Indeed, $q=\cap_{z\in V, z\not =0} z{\tilde V}$ is a prime ideal because if $x_1x_2\in q$ for some $x_i\in {\tilde V}$ then $\val(x_1x_2)\geq \gamma $ for all $\gamma\in \Gamma$ and so one of $\val(x_i)$
$i=1,2$ must be bigger than all $\gamma\in \Gamma$, that is one of $x_i$ belongs to $q$.

The goal of this paper is to describe the valuation subrings of $\bar V$ (given for some special ultrafilters), which are immediate extensions of $V$ when $\dim V=1$.  If the characteristic of $V$ is $>0$ then there exist some immediate extensions which cannot be embedded in $\bar V$ (see Remark \ref{r2}).

We owe thanks to the referee who hinted us some mistakes in a preliminary version of this paper. 

An inclusion $V \subset V'$ of valuation rings is an \emph{immediate extension} if it is local as a map of local rings and induces isomorphisms between the value groups and the residue fields of $V$ and $V'$.  
$V$ has some maximal immediate extensions (see \cite{Kap}). If the characteristic of the residue field of $V$ is zero then there exists an unique maximal immediate extension of $V$.

Let $\lambda$ be a fixed limit ordinal  and $v=\{v_i \}_{i < \lambda}$ a sequence of elements in $V$ indexed by the ordinals $i$ less than  $\lambda$. Then $v$ is \emph{pseudo convergent} if 

$\val(v_{i} - v_{i''} ) < \val(v_{i'} - v_{i''} )    \ \ \mbox{(that is,} \ \ \val(v_{i} - v_{i'}) < \val(v_{i'} - v_{i''})  ) \ \ \mbox{for} \ \ i < i' < i'' < \lambda$
(see \cite{Kap}, \cite{Sch}).
A  \emph{pseudo limit} of $v$  is an element $z \in V$ with 

$ \val(z - v_{i}) < \val(z - v_{i'}) \ \ \mbox{(that is,} \ \ \val(z -  v_{i}) = \val(v_{i} - v_{i'})) \ \ \mbox{for} \ \ i < i' < \lambda$. We say that $v$  is 
\begin{enumerate}
\item
\emph{algebraic} if some $f \in V[T]$ satisfies $\val(f(v_{i})) < \val(f(v_{i'}))$ for large enough $ i < i' < \lambda$;

\item
\emph{transcendental} if each $f \in V[T]$ satisfies $\val(f(v_{i})) = \val(f(v_{i'}))$ for large enough $i < i' < \lambda$,

\item \emph{fundamental} if for any $\gamma\in \Gamma$  there exist $i<i'$  large enough such that  $\val(v_i-v_{i'})>\gamma$, $\Gamma$ being the value group of $V$.
\end{enumerate}

We need \cite[Proposition A.6]{P}, which is obtained using \cite[Theorem 6.1.4]{CK} and says in particular the following:

\begin{Proposition}\label{kes} 
Let $U$ be an infinite set with  $card\ U=\tau$.   Then there exists an ultrafilter $\mathcal{U}$ on  $U$  such that for any valuation ring $V$ any system of polynomial equations
$(g_i((X_j)_{j\in J})_{i\in I}$ with $card\ I\leq \tau$ in variables $(X_j)_{j\in J}$ with coefficients in  the ultrapower ${\tilde V}=\Pi_{\mathcal{U}}V$ has a solution in  ${\tilde V}$ if and only if all its 
finite subsystems have.
\end{Proposition}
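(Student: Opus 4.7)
The plan is to read the proposition as an instance of saturation: we want to choose $\mathcal{U}$ so that $\tilde V = \prod_{\mathcal{U}} V$ realizes every finitely satisfiable partial type of size $\leq \tau$ with parameters in $\tilde V$, since a system of polynomial equations of cardinality $\leq \tau$ with coefficients in $\tilde V$ is exactly such a type.

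First I would invoke Keisler's classical construction cited as \cite[Theorem 6.1.4]{CK}: on an infinite set $U$ of cardinality $\tau$ there exists a countably incomplete $\tau^+$-good ultrafilter $\mathcal{U}$. The standard consequence is that for every first-order structure $M$ in a countable language (in particular any ring, hence any valuation ring $V$), the ultrapower $\prod_{\mathcal{U}} M$ is $\tau^+$-saturated, meaning every set of formulas with at most $\tau$ parameters from the ultrapower which is finitely satisfiable is realized in it.

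Next comes a routine translation. Given a system $(g_i)_{i\in I}$ with $|I|\leq \tau$, each $g_i$ is a polynomial in $(X_j)_{j\in J}$ with coefficients in $\tilde V$, hence mentions only finitely many variables and finitely many coefficients. Therefore the set of formulas $\Sigma=\{g_i((X_j)_{j\in J})=0 : i\in I\}$ has cardinality $\leq \tau$, uses $\leq \tau$ parameters from $\tilde V$, and mentions $\leq \tau$ variables in total; variables indexed by $J$ that do not appear in any $g_i$ may be assigned arbitrary values, e.g.\ $0$. The hypothesis that every finite subsystem admits a solution in $\tilde V$ is precisely the statement that $\Sigma$ is finitely satisfiable in $\tilde V$. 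Applying $\tau^+$-saturation produces a simultaneous solution of the entire system in $\tilde V$; the converse implication is trivial.

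The main obstacle is not in this translation but in the external inputs: Keisler's construction of a countably incomplete $\tau^+$-good ultrafilter on a set of cardinality $\tau$ rests on a delicate transfinite recursion producing a sufficiently generic filter, and the deduction that $\tau^+$-good countably incomplete ultrapowers are $\tau^+$-saturated is itself non-trivial. A secondary point worth checking is that $\tau^+$-saturation, usually stated for $1$-types, upgrades to realization of types in $\leq \tau$ many variables: one enumerates the relevant variables along an ordinal of length $\leq \tau$ and realizes them one at a time, at each stage invoking saturation on the $1$-type obtained by substituting the values chosen so far, while verifying that finite satisfiability is preserved.
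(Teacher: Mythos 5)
Your proposal follows essentially the same route as the paper's cited argument: the paper appeals to \cite[Proposition A.6]{P}, which is itself derived from Keisler's existence theorem for countably incomplete $\tau^+$-good ultrafilters (\cite[Theorem 6.1.4]{CK}) and the resulting $\tau^+$-saturation of the ultrapower, precisely as you describe. Your closing remark on upgrading $1$-type saturation to types in $\leq\tau$ variables by transfinite recursion is a correct elaboration of a point the paper leaves implicit.
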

The above proposition is trivial when $U={\bf N}$. In general  the ultrafilter $\mathcal U$ is very special given by  \cite[Theorem 6.1.4]{CK}.

\begin{Lemma} \label{p1} Let $U$ be an infinite set with $card\ U=\tau$, $V$ a valuation ring with value group $\Gamma$ and $card\ \Gamma\leq \tau$ and   $\lambda$ be an ordinal with $card\ \lambda\leq \tau$. Let  $\mathcal{U}$ be the ultrafilter on  $U$ given by the above proposition, ${\tilde V}= \Pi_{\mathcal{U}}V$ the ultrafilter of $V$ with respect to $\mathcal U$ and $\bar V$ its separation introduced above. Then any pseudo convergent sequence  ${\bar v}=({\bar v}_i)_{i<\lambda}$ over  $ V$  has a pseudo limit in $\bar V$.
\end{Lemma}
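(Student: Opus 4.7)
The plan is to encode the pseudo-limit condition as a polynomial system of cardinality at most $\tau$ over $V$ and then apply Proposition \ref{kes} to solve it in $\tilde V$.

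First I would set $y_i := v_i - v_{i+1}$ and $\gamma_i := \val(y_i)$. The ultrametric triangle inequality combined with the pseudo convergence hypothesis gives $\val(v_i - v_{i'}) = \gamma_i$ for all $i < i' < \lambda$, and the sequence $(\gamma_i)$ is strictly increasing in $\Gamma$. Consequently, a pseudo limit $\bar z \in \bar V$ of $\bar v$ is exactly an element of the form $\bar z = \bar v_i + y_i \bar u_i$ with $\bar u_i$ a unit of $\bar V$, for every $i < \lambda$.

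Next I would consider the system
\[
Z - v_i - y_i U_i = 0, \qquad U_i W_i - 1 = 0 \qquad (i < \lambda)
\]
in variables $Z$ and $\{U_i, W_i\}_{i < \lambda}$; it has at most $2\lambda \leq \tau$ equations with coefficients in $V \subseteq \tilde V$, so Proposition \ref{kes} is applicable. For any finite $F \subset \lambda$, pick $j \in \lambda$ with $j > \max F$ and take $Z := v_j$, $U_i := (v_j - v_i)/y_i$, $W_i := U_i^{-1}$ for $i \in F$. Since $\val(v_j - v_i) = \gamma_i = \val(y_i)$, each $U_i$ is a unit of $V$ and $W_i \in V$, so this solves the finite subsystem. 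Thus the full system admits a solution $(Z, \{U_i\}, \{W_i\})$ in $\tilde V$ with each $U_i$ a unit of $\tilde V$.

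The one delicate point, though painless once noticed, is that the condition $\val_{\bar V}(\bar Z - \bar v_i) = \gamma_i$ is an equality of valuations rather than a polynomial identity. The trick is to enforce it via the companion equation $U_i W_i = 1$: this forces $U_i$ to be a unit of $\tilde V$ and hence of $\bar V$, using that $q$ lies in the maximal ideal of $\tilde V$ so that units of $\bar V$ lift to units of $\tilde V$. The image $\bar Z$ in $\bar V$ then satisfies $\bar Z - \bar v_i = y_i \bar U_i$ with $\bar U_i$ a unit of $\bar V$, so $\bar Z$ is the desired pseudo limit.
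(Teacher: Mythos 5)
Your proof is correct and uses the same core idea as the paper: encode the pseudo-limit condition as a polynomial system of cardinality at most $\tau$ over $\tilde V$, apply Proposition \ref{kes} by checking finite subsystems, and produce the pseudo limit as the image in $\bar V$ of a solution in $\tilde V$. Your handling of the finite subsystems (take $Z := v_j$ for $j$ past the finite index set, use $U_i := (v_j - v_i)/y_i$, which is a unit of $V$ because $\val(v_j - v_i) = \val(y_i)$) is exactly the right computation and matches the paper's verification via $x = \tilde v_{i_e+1}$.

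Where you genuinely differ is that the paper lifts the terms $\bar v_i$ from $\bar V$ to $\tilde V$ and inserts auxiliary terms $z_\gamma Z_\gamma$ together with equations $S'_{i\gamma}$, in effect translating the equality required in $\bar V = \tilde V/q$ into a family of congruences modulo $z_\gamma\tilde V$. This machinery is there to handle the passage from $\bar V$-coefficients to $\tilde V$-coefficients. You instead read the hypothesis as the sequence having its terms in $V$ itself (as the lemma's applications in Lemma \ref{com} and Theorem \ref{t} actually require), so the system already has coefficients in $V\subset\tilde V$, a solution in $\tilde V$ is obtained directly, and its image in $\bar V$ is the pseudo limit; the $z_\gamma Z_\gamma$ terms then become superfluous. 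Your route is cleaner for the case actually used, while the paper's version appears aimed at the more general case of a sequence with terms in $\bar V$.

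One small inaccuracy: your parenthetical remark that ``units of $\bar V$ lift to units of $\tilde V$'' is stated in the wrong direction for what the argument needs, and also invokes a fact that is not actually required. What you use is simply that a surjective ring homomorphism sends units to units, so that $U_iW_i = 1$ in $\tilde V$ gives $\bar U_i\bar W_i = 1$ in $\bar V$; no lifting is involved. This does not affect the correctness of the argument.
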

\begin{proof}

Let $\mathcal{S}$ be the system of polynomial equations over $\bar V$
$$S_i:=X-{\bar v}_i-Y_i({\bar v}_{i+1}-{\bar v}_i); Y_iY'_i-1, \ i<\lambda.$$
  For each $\gamma\in \Gamma_+$ choose an element $z_{\gamma}\in V$ with $\val(z_{\gamma})=\gamma$ and lift ${\bar v}_i$ to some elements ${\tilde v}_i\in {\tilde V}$. Let $\mathcal{S}'$ be the system of polynomial equations
 $$S'_{i\gamma}:=X-{\tilde v}_i-Y_i({\tilde v}_{i+1}-{\tilde v}_i)-z_{\gamma}Z_{\gamma};$$ 
$$Y_iY'_i-1,\ \ \mbox{for}\ \ i<\lambda,\ \gamma\in \Gamma_+,$$
and some variables $X,Y_i,Y'_i,Z_{\gamma}$. 

Then $\mathcal{S}'$ has a solution in $\tilde V$ if and only if $\mathcal{S}$ has a solution modulo $z_{\gamma}{\tilde V}$ for all $\gamma\in \Gamma$, that is if $\mathcal S$ has a solution in $\bar V$, which happens if and only if  $({\tilde v}_i)_{i<\lambda}$ has a pseudo limit in $\tilde V$. Note that the cardinal of the system $\mathcal{S}'$ is   $ \leq \tau$.
By the above proposition, $\mathcal{S}'$ has solutions in $\tilde V$ if and only if every finite subsystem $\mathcal{T}$ of $\mathcal{S}'$ has a solution in $\tilde V$. We may enlarge $\mathcal{T}$ such that it has the form 
$$(S'_{i\gamma})_{i=i_1,\ldots,i_e;\gamma=\gamma_1,\ldots,\gamma_e}$$ for some $i_1<\ldots <i_e<\lambda$ and $\gamma_1,\ldots, \gamma_e\in \Gamma_+$ . But then $x={\tilde v}_{i_e+1}$ induces  a solution of $\mathcal{T}$ in $\tilde V$ because 
$$\val({\tilde v}_{ i_e+1}-{\tilde v}_{i_j})=\val({\tilde v}_{i_{j+1}}-{\tilde v}_{i_j})$$
 for $1\leq j\leq e$ and so there exist some units $y_j\in {\tilde V}$ such that
 $${\tilde v}_{ i_e+1}-{\tilde v}_{i_j}-y_j({\tilde v}_{i_{j+1}}-{\tilde v}_{i_j})\in \cap_{z\in V, z\not =0} z{\tilde V},$$
 for $1\leq j\leq e$.  Thus $({\bar v}_i)_{i<\lambda}$ has a pseudo limit $x$ in $\bar  V$. 
\hfill\ \end{proof}
\begin{Remark} Let $K$ be the fraction field of $V$. If in the above proof $v$  is transcendental then $\val(x)\in \Gamma$ and even the extension $K\subset K(x)$ is immediate (see \cite[Theorem 2]{Kap}). If $v$ is algebraic then $\val( x)$ could be in ${\tilde \Gamma}\setminus \Gamma$, $\tilde \Gamma$ being  the value group of $\tilde V$. 
\end{Remark}
\begin{Lemma} \label{com} Let $U$, $\tau$, $\mathcal U$, $V$, $\Gamma$ be as in Lemma \ref{p1}.  Then the extension $V\subset {\bar V}$ factors through the completion of $V$. 
\end{Lemma}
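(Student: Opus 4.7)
The plan is to construct a ring homomorphism $\phi:\hat V\to\bar V$ extending the inclusion $V\hookrightarrow\bar V$, where $\hat V$ denotes the completion of $V$ in its valuation topology. Every element $\hat a\in\hat V$ arises as a pseudo limit, taken in $\hat V$, of some fundamental pseudo convergent sequence $(v_i)_{i<\lambda}$ in $V$. By choosing the index to track a cofinal subset of $\Gamma_+$ of minimal cardinality, one may arrange $\mathrm{card}\,\lambda\leq \mathrm{card}\,\Gamma\leq\tau$, so Lemma~\ref{p1} applies and furnishes a pseudo limit $\bar a\in\bar V$ of $(v_i)$. One then sets $\phi(\hat a):=\bar a$ (and $\phi|_V$ is the given inclusion).

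The essential step is well-definedness, which rests on the single observation that, by construction $\bar V=\tilde V/\bigcap_{z\in V\setminus\{0\}}z\tilde V$, any element of $\bar V$ whose valuation exceeds every $\gamma\in\Gamma$ must be zero. Consequently, a pseudo limit in $\bar V$ of a \emph{fundamental} pseudo convergent sequence is unique, since two such pseudo limits would differ by an element satisfying this vanishing condition. In the same spirit, if two fundamental pseudo convergent sequences $(v_i)_{i<\lambda}$ and $(w_j)_{j<\mu}$ in $V$ share a pseudo limit $\hat a\in\hat V$, then for every $\gamma\in\Gamma$ one can find $i,j$ with $\val(v_i-w_j)>\gamma$; combined with the pseudo limit inequalities in $\bar V$ this forces the two associated pseudo limits in $\bar V$ to coincide. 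Compatibility of $\phi$ with addition and multiplication then follows by passing to a common index set and observing that termwise sums and products of fundamental pseudo convergent sequences contain fundamental pseudo convergent subsequences whose pseudo limits are the expected sums and products.

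The hard analytic input is already packaged inside Lemma~\ref{p1}; the remaining challenge is largely organizational, keeping track of three parallel pictures --- sequences in $V$, their pseudo limits in $\hat V$, and their pseudo limits in $\bar V$. The uniform technical lever in each comparison step is the ``bounded valuation'' property of $\bar V$ noted above, which converts valuation inequalities beyond $\Gamma$ into equalities, and the reason this step succeeds under the hypothesis of the lemma is precisely that Lemma~\ref{p1} produces pseudo limits for all sequences of cardinality $\leq\tau$, which is enough to reach every element of $\hat V$.
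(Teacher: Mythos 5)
Your proposal is correct and takes essentially the same approach as the paper: both use Lemma~\ref{p1} to obtain pseudo limits in $\bar V$ for fundamental sequences over $V$, and both rely on the separatedness $\cap_{z\in V,\,z\not=0} z\bar V=0$ to pin the limit down uniquely. The paper's proof is simply the terse version of yours --- rather than building the map $\hat V\to\bar V$ explicitly and checking well-definedness and ring compatibility, it observes that the set of all such limits forms a separated valuation subring containing $V$ and hence must be (a copy of) $\hat V$.
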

\begin{proof} By the above lemma any fundamental sequence over $V$ has a limit in $\bar V$. The limits of the fundamental 
sequences over $V$ form a valuation subring $\hat V$ which must be separate 
 because $\cap_{z\in V, z\not = 0} z{\bar V}=0$. Hence $\hat V$ is the completion of $V$.
\hfill\ \end{proof}

\begin{Lemma} \label{mb}  Let $V, \Gamma, {\mathcal U}, U,\tau {\tilde V},{\bar V}$ be as in Lemma \ref{p1}, $a$ an element of $V$ with $\val(a)>0$ and $B$ a finitely presented $V$-algebra. Assume  $V$ is Henselian and the completion inclusion $V\subset {\hat V}$ is separable. Then any $V$-morphism $B\to {\bar V}$ could be lifted modulo $a{\bar V}$ to a $V$-morphism $B\to {\tilde V}$.
\end{Lemma}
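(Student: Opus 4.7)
Write $B = V[X_1,\dots,X_n]/(f_1,\dots,f_m)$ and let $\bar y_i \in \bar V$ denote the images of $X_i$ under the given morphism $\phi\colon B \to \bar V$, so $f_j(\bar y) = 0$ in $\bar V$ for every $j$. Lift each $\bar y_i$ to an element $\tilde y_i^{(0)} \in \tilde V$; then $f_j(\tilde y^{(0)}) \in q = \cap_{z \neq 0}\, z\tilde V$. Giving a $V$-morphism $\psi\colon B \to \tilde V$ lifting $\phi$ modulo $a\bar V$ amounts to producing a tuple $\tilde y \in \tilde V^n$ with $f_j(\tilde y) = 0$ in $\tilde V$ and $\tilde y_i - \tilde y_i^{(0)} \in a\tilde V + q$.

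The plan is to mimic the ultrafilter trick used in Lemma \ref{p1}. I would form the system $\mathcal S'$ of polynomial equations over $\tilde V$ in variables $X_i, W_i, Z_{i\gamma}$ ($i=1,\dots,n$, $\gamma\in\Gamma_+$):
$$f_j(X_1,\dots,X_n) = 0 \quad (j=1,\dots,m), \qquad X_i - \tilde y_i^{(0)} - aW_i - z_\gamma Z_{i\gamma} = 0 \quad (i,\gamma),$$
where $\val(z_\gamma)=\gamma$. A $\tilde V$-solution projects (via the $X_i$) to exactly the desired tuple $\tilde y$. Since $\operatorname{card} \mathcal S' \leq \tau$, Proposition \ref{kes} reduces the task to solving every finite subsystem in $\tilde V$. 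A finite subsystem involves only finitely many indices $\gamma_1,\dots,\gamma_k$; absorbing them together with $a$ into a single $c \in V$ of minimal valuation, the problem becomes: for each $c \in V$ with $\val(c) > 0$, find $\tilde y \in \tilde V^n$ with $f_j(\tilde y) = 0$ in $\tilde V$ and $\tilde y \equiv \tilde y^{(0)} \pmod{c\tilde V}$.

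For the remaining step I would exploit the ultrapower structure: represent $\tilde y^{(0)} = [y^{(u)}]_{u \in U}$ with $y^{(u)} \in V^n$. Because $f_j(\tilde y^{(0)}) \in q$, for every $\gamma \in \Gamma_+$ the set $\{u : \val(f_j(y^{(u)})) > \gamma\}$ lies in $\mathcal U$, so the $y^{(u)}$ are arbitrarily good approximate solutions of $f_j = 0$ for $\mathcal U$-almost all $u$. The hypotheses that $V$ is Henselian and $V \subset \hat V$ is separable are precisely what make general N\'eron desingularization (Popescu's theorem, in its form for valuation rings) applicable, yielding a strong Artin approximation over $V$: once $\val(f_j(y^{(u)}))$ exceeds the Artin function evaluated at $c$, one may replace $y^{(u)}$ by an exact solution $y'^{(u)} \in V^n$ of $f_j = 0$ with $y'^{(u)} \equiv y^{(u)} \pmod{cV}$. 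The ultraclass $\tilde y = [y'^{(u)}]_u \in \tilde V^n$ then solves the finite subsystem, and \L o\'s's theorem ensures $f_j(\tilde y)=0$ in $\tilde V$ exactly.

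The main obstacle is the last step, namely deducing the pointwise strong approximation over $V$ from the Henselian and separability hypotheses with a uniform enough bound to survive passage through the ultraproduct. Everything else is a mechanical combination of Proposition \ref{kes} with the ideal-theoretic translation of the congruence modulo $a\bar V$; the genuine input is that the algebraic geometry of $B$ over the Henselian valuation ring $V$ is tame enough, through N\'eron desingularization, that approximate solutions near $\tilde y^{(0)}$ can be upgraded to exact ones.
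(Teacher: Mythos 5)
Your strategy is workable and you have correctly identified the crucial input, but you add a layer of machinery the paper does not need, and the step you flag as ``the main obstacle'' is precisely what must be nailed down with a citation rather than gestured at.

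First, the detour through Proposition \ref{kes} is unnecessary. Lifting modulo $a\bar V$ only requires producing $\tilde y\in\tilde V^n$ with $f(\tilde y)=0$ and $\tilde y-\tilde y^{(0)}\in a\tilde V$; this is stronger than $\tilde y-\tilde y^{(0)}\in a\tilde V+q$ and certainly sufficient. Aiming for the stronger condition eliminates the auxiliary variables $Z_{i\gamma}$, the infinite family of equations indexed by $\Gamma_+$, and the finite-subsystem reduction entirely; there is a single congruence, not an infinite system, so the $\tau$-saturation of $\mathcal U$ plays no role in this lemma. (Incidentally, in your reduction to a single $c$, you want $c$ of \emph{maximal}, not minimal, valuation among $\val(a),\gamma_1,\dots,\gamma_k$ that is $\leq\val(a)$; the congruence $\tilde y\equiv\tilde y^{(0)}\pmod{a\tilde V}$ already implies all of them.)

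Second, and more importantly, the pointwise strong approximation you invoke is not a routine corollary of N\'eron desingularization; the precise result needed is Moret-Bailly's extension of Greenberg's theorem to general valuation rings \cite[Theorem 1.2]{MB}. Under exactly the stated hypotheses ($V$ Henselian, $V\subset\hat V$ separable), it produces $N\in\mathbf Z_{>0}$ and $\nu\in\Gamma_+$ depending only on $f$ such that for any $z\in V^n$ with $\val(f(z))\geq N\gamma+\nu$, where $\gamma=\val(a)$, there is $z'\in V^n$ with $f(z')=0$ and $\val(z-z')\geq\gamma$. This bound is uniform (independent of $z$), which is what you correctly observed is needed for the argument to pass through the ultraproduct. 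Writing $\tilde y^{(0)}=[y^{(u)}]_u$, the hypothesis $f(\tilde y^{(0)})\in q$ puts $\{u:\val(f(y^{(u)}))\geq N\gamma+\nu\}$ in $\mathcal U$; applying Moret-Bailly on that set and setting $y'^{(u)}=0$ off it yields $\tilde y'=[y'^{(u)}]_u$ with $f(\tilde y')=0$ in $\tilde V$ and $\tilde y'-\tilde y^{(0)}\in a\tilde V$. That is the paper's entire proof, and it is where the Henselian and separability hypotheses are actually used.
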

\begin{proof} The proof is similar to the proof of \cite[Corollary 2.7]{P0} or part of the proof of \cite[Theorem 2.9]{P1}
(see also \cite{BDLV}). Let $B\cong V[Y]/(f)$, $Y=(Y_1,\ldots,Y_n)$, $f=(f_1,\ldots,f_m)$ and ${\bar w}:B\to {\bar V}$ given by $Y\to {\bar y}\in {\hat V}^n$, let us say that $\bar y$ is induced by ${\tilde y}=[(y_u)_{u\in U}]\in \tilde V$. Set $\gamma=\val(a)$. By \cite[Theorem 1.2]{MB} applied to $V$, there exist a positive integer $N$ and $\nu\in \Gamma_+$ such that if $z\in V$ and 
 $\val(f(z))\geq N\gamma+\nu$
  then there exists $z'\in 
V$ such that $f(z')=0$ and $\val(z-z')\geq \gamma$.

By construction we have in particular $\val(f((y_u))\geq N\gamma+\nu$ for all $u$ from a set $\delta\in \mathcal{U}$. So there exists $y'_u\in V$ such that $f(y'_u)=0$ and $\val(y_u-y'_u)>\gamma$. Define $y'_t=0$ if $t\not \in \delta$ and let ${\tilde y}'=[(y'_u)_u\in {\tilde V}]$. Then $f({\tilde y}')=0$ in $\tilde V$ and the $V$-morphism $B\to {\tilde V}$ given by $Y\to {\tilde y}'$ lifts $\bar w$ modulo $a{\hat V}$. 
\hfill\ \end{proof}

\begin{Lemma} \label{l} Let $V, \Gamma, {\mathcal U}, U,\tau {\tilde V},{\bar V}$ be as in Lemma \ref{p1} and $V'\subset {\bar V}$ a valuation subring, which is an immediate extension of $V$. Assume  $V$ is Henselian and the completion inclusion $V\subset {\hat V}$ is separable. Then any algebraic pseudo convergent sequence of $V$ which has a pseudo limit in $V'$ has one also in $V$. 
\end{Lemma}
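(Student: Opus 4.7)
The plan is to reduce the existence of a pseudo limit in $V$ to finding a $V$-solution of a single polynomial system, and then to extract such a solution by invoking Lemma~\ref{mb} to handle each finite subsystem, Proposition~\ref{kes} to pass from finite subsystems to the full system, and a finiteness trick for the ultrafilter $\mathcal U$ to force the resulting $\tilde V$-element to be a constant from $V$. The starting point is to pick $f\in V[T]$ of minimal degree with $\val(f(v_i))<\val(f(v_{i'}))$ for large enough $i<i'<\lambda$; this exists by the algebraic hypothesis, and by Kaplansky's structural theorem \cite{Kap} any pseudo limit of $(v_i)$ in an immediate valued field extension of $V$ is annihilated by $f$, so in particular $f(z')=0$. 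Write $\gamma_i=\val(v_i-v_{i+1})$.

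For any finite tuple $i_1<\cdots<i_e<\lambda$ I consider the finitely presented $V$-algebra
\begin{equation*}
B_e=V[T,Y_1,Y'_1,\ldots,Y_e,Y'_e]\big/\bigl(f(T),\ T-v_{i_j}-Y_j(v_{i_j+1}-v_{i_j}),\ Y_jY'_j-1\bigr)_{j=1}^{e}
\end{equation*}
together with the $V$-morphism $B_e\to\bar V$ given by $T\mapsto z'$ and $Y_j\mapsto (z'-v_{i_j})/(v_{i_j+1}-v_{i_j})$. This is well defined because $f(z')=0$ and each $Y_j$ evaluates to a unit of $V'$, using $\val(z'-v_{i_j})=\gamma_{i_j}\in\Gamma$, which is valid since $V'/V$ is immediate. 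Lemma~\ref{mb} then lifts this morphism to a $V$-morphism $B_e\to\tilde V$, $T\mapsto[(z_{e,u})_u]$; since the defining relations of $B_e$ hold exactly in $\tilde V$, on a set of $u$'s in $\mathcal U$ the elements $z_{e,u}\in V$ satisfy $f(z_{e,u})=0$ and $\val(z_{e,u}-v_{i_j})=\gamma_{i_j}$ for every $j\leq e$. Consequently the polynomial system
\begin{equation*}
\mathcal S\colon\quad f(X)=0,\ \ X-v_i-Y_i(v_{i+1}-v_i)=0,\ \ Y_iY'_i-1=0,\qquad i<\lambda,
\end{equation*}
of cardinality $\leq\tau$ has a $V$-solution (hence a $\tilde V$-solution) for every finite subsystem, and Proposition~\ref{kes} then produces a solution $\tilde z^*=[(z^*_u)_u]\in\tilde V$ of all of $\mathcal S$.

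The algebraic condition now pays off decisively: $f$ has at most $\deg f$ roots in $V$, say $r_1,\ldots,r_k$, so on a set in $\mathcal U$ each $z^*_u$ lies in the finite set $\{r_1,\ldots,r_k\}$. Since $\mathcal U$ is an ultrafilter on this finite partition, some single piece $\{u\colon z^*_u=r_{l_0}\}$ already belongs to $\mathcal U$, and therefore $\tilde z^*$ equals the constant $r_{l_0}$ in $\tilde V$; in particular $\tilde z^*\in V$. The remaining equations of $\mathcal S$ then become the genuine identities $\val(r_{l_0}-v_i)=\gamma_i$ for every $i<\lambda$, exhibiting $r_{l_0}$ as the required pseudo limit of $(v_i)$ in $V$. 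The hard part, I expect, will be the very first step: the appeal to Kaplansky's theorem guaranteeing $f(z')=0$ for the minimal-degree witness $f$ of the algebraic character of $(v_i)$. Once this input is in hand, the ultrafilter collapsing of a $\tilde V$-root of $f$ down to an element of $V$ is the decisive and somewhat nonobvious ingredient that converts the abstract $\tilde V$-solution produced by Proposition~\ref{kes} into an honest element of $V$.
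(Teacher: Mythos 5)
Your foundational step is unsupported: you claim that ``by Kaplansky's structural theorem any pseudo limit of $(v_i)$ in an immediate valued field extension of $V$ is annihilated by $f$,'' but Kaplansky proves no such thing. What Kaplansky shows (Theorem~3 of \cite{Kap}) is that one \emph{can construct} a pseudo limit of an algebraic pseudo convergent sequence by adjoining a suitable root of the minimal-degree polynomial $h$; he does not assert that every pseudo limit in every immediate extension is a root of $h$. Indeed $h(z')$ merely has to satisfy $\val(h(z'))>\val(h(v_\rho))$ for all large $\rho$, which does not force $h(z')=0$, and this is precisely why the paper cannot (and does not) put the equation $h(Z)=0$ into its polynomial system. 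Since your entire construction --- the algebra $B_e$ containing $f(T)$ as a relation, the morphism $T\mapsto z'$, and above all the ``decisive'' finiteness trick counting roots of $f$ in $V$ --- rests on $f(z')=0$, the proof collapses without it.

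The paper takes a different and more robust route: it invokes \cite[Proposition 6.5]{Po0}, which characterizes pseudo limits $z$ of $v$ purely through the \emph{valuations} $\val(h^{(i)}(z))$ of the Taylor coefficients of $h$, without demanding $h(z)=0$. From this one writes a single \emph{finite} system $g$ over $V$ (one equation $h^{(i)}(Z)=d_iU_i$ plus a unit relation per nonzero derivative), which has a $\bar V$-solution coming from $x$; one direct application of Lemma~\ref{mb} lifts this to a $\tilde V$-solution, and since $\tilde V$ is an ultrapower of $V$, some coordinate already supplies a $V$-solution. In particular Proposition~\ref{kes} plays no role here --- it is needed in Lemma~\ref{p1}, where the system has transfinite length, but not in Lemma~\ref{l}, where the degree of $h$ bounds everything. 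You should replace the false appeal to Kaplansky by the derivative-valuation criterion of \cite[Proposition 6.5]{Po0}; once you do, you will find the infinite system $\mathcal S$, the compactness step via Proposition~\ref{kes}, and the root-counting argument can all be discarded.
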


\begin{proof} Let $v=(v_j)_{j<\lambda}$ be an algebraic pseudo convergent sequence of $V$ which has a pseudo limit $x$ in $V'$. Let $h\in V[X]$ be a polynomial of minimal degree among  the polynomials $f\in V[Y]$ such that
 $\val(f(v_i))<\val(f(v_j))$ for large $i<j<\lambda$. Set $h^{(i)}=\partial^ih/\partial X^i$, $0\leq i\leq \deg h$ with $h^{(i)}\not =0$. By  \cite[Proposition 6.5]{Po0} there exists an ordinal $\nu<\lambda$ such that  
 an element $z\in V$ is a pseudo limit of $v$ if and only if $\val(h^{(i)}(z))=\val(h^{(i)}(v_{\nu}))$ where $h^{(i)}\not =0$ and $\val(h(z))>\val(h(v_{\rho}))$ for $\nu\leq \rho<\lambda$. In particular, we have $\val(h^{(i)}(x))=\val(h^{(i)}(v_{\nu}))$ where $h^{(i)}\not =0$ and $\val(h(x))>\val(h(v_{\rho}))$ for $\nu\leq \rho<\lambda$. Thus 
if $z\in V$ satisfies 
  $\val(h^{(i)}(z))= \val(h^{(i)}(x))$ for all $0\leq i\leq \deg h$  with $h^{(i)}\not =0$ then $z$ is a pseudo limit of  $v$.
  
  Let $d_i\in V$ such that $\val(d_i)=\val(h^{(i)}(x))$ for  $0\leq i\leq \deg h$ with $h^{(i)}\not =0$,
  let us say $h^{(i)}(x)=d_it_i$ for some invertible $t_i\in V'$,  and  $g$  the system of equations $h^{(i)}(Z)-d_iU_i$, $U_iU'_i-1$. If $z, (u_i)_i$ is a solution of $g$ in $V$ then $z$ is a pseudo limit of  $(v_j)_{j<\lambda}$.   But the map $B:=V[Z,(U_i)_i]/(g)\to {\bar V}$ given by $(Z, (U_i), (U_i'))\to (x,(t_i),(t_i^{-1}))$ could be lifted by Lemma \ref{mb} to a map $B\to {\tilde V}$, that is $g$ has a solution in $\tilde V$ and so in $V$ as well. This ends the proof.
\hfill\ \end{proof} 
 \begin{Remark}\label{r} If $V$ is Henselian and $V'$ is a filtered direct limit of smooth $V$-algebras we get as above  that any algebraic pseudo convergent sequence of $V$ which has a pseudo limit in $V'$ has also one in $V$. Indeed, let $x,(v_j)_{j<\lambda},h,(d_i), (t_i),g$ as above. Then the solution $(x,(t_i),(t_i^{-1}))$ of $g$ in $V'$ comes from a solution of $g$ in a smooth $V$-algebra $C$. But there exists a $V$-morphism   $\rho$ from $C$ to  $V$ because $V$ is Henselian. Thus we get a solution of $g$ in $ V$ via $\rho$, so $(v_j)_{j<\lambda}$ has a pseudo limit  in $V$.
 \end{Remark}

\begin{Lemma} \label{l1} Let $V, \Gamma, {\mathcal U}, U,\tau {\tilde V},{\bar V}$ be as in Lemma \ref{p1} and $V''\subset V'\subset {\bar V}$ some valuation subrings such that $V\subset V''$, $V''\subset V'$ are immediate extensions.  Assume  $V''$ is Henselian and the completion inclusion $V''\subset {\hat V}''$ is separable. Then any algebraic pseudo convergent sequence of $V''$ which has a pseudo limit in $V'$ has one also in $V''$. 
\end{Lemma}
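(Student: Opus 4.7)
The plan is to reduce the statement to Lemma \ref{l} by applying that lemma with $V''$ in place of $V$. Two things must be checked: that the ultrapower setup of Lemma \ref{p1} transfers from $V$ to $V''$ unchanged, and that $V'$ still sits inside the separation $\bar{V''}$ of the ultrapower of $V''$.

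For the first point, I would note that the ultrafilter $\mathcal{U}$ produced by Proposition \ref{kes} is uniform in the valuation ring, so the same $\mathcal{U}$ serves $V''$. Since $V''$ is immediate over $V$ it shares the value group $\Gamma$, so the constraint $\mathrm{card}\,\Gamma\leq\tau$ already given for $V$ is automatic for $V''$; likewise the bound $\mathrm{card}\,\lambda\leq\tau$ on the index ordinal of the sequence is inherited. Thus Lemma \ref{p1} applies to $V''$ with $\tilde{V''}=\Pi_{\mathcal{U}}V''$ and its separation $\bar{V''}$, and Lemma \ref{mb} is available over $V''$ as well.

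For the second, I would build the chain $V'\subset\bar V\subset\bar{V''}$. The inclusion $V\subset V''$ induces a valuation-preserving inclusion $\iota\colon\tilde V\hookrightarrow\tilde{V''}$. The defining ideals
$$q_V=\bigcap_{z\in V,\,z\neq 0}z\tilde V,\qquad q_{V''}=\bigcap_{z\in V'',\,z\neq 0}z\tilde{V''}$$
are both characterized as the elements whose valuation exceeds every element of the common value group $\Gamma$, so $\iota^{-1}(q_{V''})=q_V$ and the induced map on separations $\bar V\hookrightarrow\bar{V''}$ is injective. In particular $V'\subset\bar V\subset\bar{V''}$.

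Once these are in place, every hypothesis of Lemma \ref{l} is satisfied with $V''$ replacing $V$: the ring $V''$ is Henselian with separable completion, $V'\subset\bar{V''}$ is a valuation subring, and $V''\subset V'$ is an immediate extension. Invoking Lemma \ref{l} then yields the desired pseudo limit in $V''$. The only non-routine step, and the one I expect to be the main obstacle, is the compatibility $\bar V\subset\bar{V''}$; it rests precisely on the immediacy of $V\subset V''$ forcing the two cut-off ideals $q_V$ and $q_{V''}$ to agree on $\tilde V$.
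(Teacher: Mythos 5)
Your proposal is correct and follows the same route as the paper: apply Lemma \ref{l} with $V''$ in place of $V$, after forming $\bar V''$ with the same ultrafilter $\mathcal U$ and observing the chain $V'\subset\bar V\subset\bar{V''}$. The paper states this quite tersely (it essentially just remarks that the same $U$ and $\mathcal U$ are used), whereas you usefully spell out the compatibility of the cut-off ideals $q_V$ and $q_{V''}$, which is exactly where immediacy of $V\subset V''$ enters.
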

\begin{proof} Let $\bar V''$ be given from $V$ as $\bar V$ from $V$ and $(v_j)_j$ an algebraic pseudo convergent sequence over $V''$ which has a pseudo limit in $V'\subset {\bar V}\subset {\bar V''}$. By Lemma \ref{l} applied to $V''$ it has one in $V''$. Note that the construction of $\bar V$, ${\bar V}''$ are done with the same $U$ and $\mathcal{U}$.
\hfill\ \end{proof} 

\begin{Theorem}\label{t} Let $V\subset V'$ be an immediate extension of one dimensional valuation rings and $ \Gamma, {\mathcal U}, U,\tau, {\tilde V},$
${\bar V}$ be as in Lemma \ref{p1}. Assume  $V'$ is complete and card $U\geq $card $ \Gamma$. The following statements are equivalents:
\begin{enumerate}
\item the extension $V\subset {\bar V}$ factors through $V'$,

\item for any valuation subring $V''\subset  V'$ such that $V\subset V''$ and $V''\subset V'$ are immediate extensions any algebraic pseudo convergent sequence of $V''$ which is not fundamental and has a pseudo limit in $V'$ has one also in $V''$.
\end{enumerate}
Moreover, if $V\subset V'$ is separable and one from $(1)$, $(2)$ holds then  $V'$ is a filtered direct limit of smooth $V$-algebras.
\end{Theorem}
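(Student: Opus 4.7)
For $(1)\Rightarrow (2)$, assume $V\subset V'\subset \bar V$. The conclusion is essentially Lemma~\ref{l1} applied to the triple $(V,V'',V')$ for any intermediate $V''$ as in~(2). The only subtlety is verifying that lemma's technical hypotheses on $V''$ (Henselian and separable completion): since $V'$ is complete and hence Henselian, the Henselization $(V'')^{h}\subset V'$ is immediate over $V''$, so one applies Lemma~\ref{l1} with $(V'')^{h}$ in place of $V''$; the minimal-polynomial criterion of \cite[Proposition~6.5]{Po0} then transfers the pseudo limit from $(V'')^{h}$ back to $V''$, using that the sequence is non-fundamental and algebraic.

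For $(2)\Rightarrow (1)$, I would build the embedding $V'\hookrightarrow \bar V$ extending $V\hookrightarrow \bar V$ by transfinite induction along a chain $V=V_{0}\subset V_{1}\subset\cdots\subset V_{\alpha}\subset\cdots$ of valuation subrings of $V'$, each immediate over $V$, with union $V'$. By the classical structure theory (Kaplansky), each successor $V_{\alpha}\subset V_{\alpha+1}$ adjoins a pseudo limit of some pseudo convergent sequence $(v_{j})$ in $V_{\alpha}$. Three cases arise: if $(v_{j})$ is fundamental, Lemma~\ref{com} produces the pseudo limit in $\bar V$ through the completion; if it is transcendental and non-fundamental, Lemma~\ref{p1} supplies a pseudo limit in $\bar V$ and Kaplansky's rigidity for transcendental pseudo limits extends the embedding uniquely; if it is algebraic and non-fundamental, hypothesis~(2) forces the pseudo limit to already lie in $V_{\alpha}$, so nothing new is adjoined. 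Colimits handle the limit ordinals. The main obstacle lies here: verifying coherence of the chain inside $V'$ and that hypothesis~(2) (which is a condition on \emph{every} intermediate $V''$) can be reliably invoked at each algebraic step, possibly after passing to Henselizations.

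For the final claim, assume (1) together with separability of $V\subset V'$. The morphism $V\to V'$ is flat (as an inclusion of valuation rings, $V'$ is torsion-free over $V$), the closed fiber equals the common residue field $k$ (since $\mathfrak m_{V'}=\mathfrak m_V V'$ for immediate extensions), and the generic fiber is the separable field extension $K\subset K'$. Hence $V\to V'$ is a regular morphism, so by Popescu's General Neron Desingularization $V'$ is a filtered direct limit of smooth $V$-algebras. Alternatively, a direct argument uses Lemma~\ref{mb}: for any finitely presented $V$-algebra $B$ with a morphism $B\to V'\subset \bar V$ and any nonzero $a\in\mathfrak m_V$, the lemma provides a lift $B\to \tilde V$ modulo $a\bar V$, and a standard Artin approximation argument converts this into a smooth factorization.
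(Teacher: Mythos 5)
Your argument for $(1)\Rightarrow(2)$ is sound in outline and close to the paper's, which passes to the completion $\hat V''$ (automatically Henselian because $\dim V''=1$) rather than the Henselization, applies Lemma~\ref{l1} there, and then uses \cite[Lemma 2.5]{Po0} together with the non-fundamentality hypothesis to pull the pseudo limit back into $V''$; your route via Henselization and \cite[Proposition 6.5]{Po0} targets the same mechanism. For $(2)\Rightarrow(1)$, the paper does not build a chain by transfinite recursion; it applies Zorn's lemma directly to produce a $V''\subset V'$ \emph{maximal} among immediate subextensions of $V$ inside $V'$ through which $V\subset\bar V$ factors, and then shows $V''\neq V'$ is impossible. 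This avoids precisely the coherence issue you flag: one never needs to exhibit a well-ordered chain reaching $V'$; instead, picking any $x\in V'\setminus V''$ and a Kaplansky sequence for it over $V''$, hypothesis~(2) forces the sequence to be fundamental or transcendental, and in either case (via Lemma~\ref{com} for the fundamental case, Lemma~\ref{p1} plus Kaplansky's uniqueness theorem for the transcendental case, and the completeness of $V'$) the factorization extends strictly past $V''$, contradicting maximality.

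The ``Moreover'' part has a genuine gap. You invoke Popescu's General N\'eron Desingularization from the flatness, immediate-residue, and separable-generic-fiber data, but GND is a theorem about \emph{Noetherian} rings, and a one-dimensional valuation ring need not be Noetherian (e.g.\ any rank-one valuation with value group $\QQ$, such as the Puiseux valuation); so this does not apply in the theorem's stated generality. The paper's argument is structurally different and does not go through a regular-morphism criterion at all. It first reduces to the case where $K'=\Frac V'$ is finitely generated over $K=\Frac V$ (writing $V'$ as a filtered union of $V'\cap L$), then by induction to $K'=K''(x)$; then it again uses~(2) (not just~(1)) to see that the pseudo convergent sequence attached to $x$ over $V''$ is either transcendental, in which case \cite[Theorem 3.2]{Po} writes $V'$ as a filtered union of localizations of polynomial $V''$-algebras, or fundamental, in which case $V''\subset V'$ is dense and separable and a N\'eron--Schappacher type theorem (\cite[Theorem 4.1]{Po0}) is used. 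Your ``alternative'' via Lemma~\ref{mb} likewise does not close the gap: lifting morphisms modulo $a$ yields Artin-approximation-type statements, but it is not in itself a device for producing a filtered colimit of smooth $V$-algebras.
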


\begin{proof}    Suppose $(1)$ holds and let $V''$ be as in $(2)$. Then the completion ${\hat V}''$ of $V''$ is contained in 
${\bar V}''$ by Lemma \ref{com}. An algebraic pseudo convergent sequence $v$ over $V''$ which has a pseudo limit in $V'\subset {\bar V}\subset {\bar V''}$ must have a pseudo limit in ${\hat V}''$ by Lemma \ref{l1} because ${\hat V}''$ is Henselian since $\dim V''=1$. Then $v$ has a pseudo limit in $V''$ too by \cite[Lemma 2.5]{Po0} for example.

 Assume  $(2)$ holds.
 Let $V''\subset V'$ be a valuation subring such that $V\subset V''$ and $V''\subset V'$ are immediate and $K''$ its fraction field. Applying Zorn's Lemma we may suppose that $V''$ is maximal for inclusion among those immediate extensions $W\subset V'$ of $V$ such that $V\subset {\bar V}$ factors through $W$.   Assume that $V''\not =V'$.  Let  $x\in V'\setminus V''$ and $v$ be a pseudo convergent sequence over $V''$ having $x$ as a pseudo limit  but with no pseudo limit in $V''$ (see \cite[Theorem 1]{Kap}). Then $v$ is either fundamental or transcendental by $(2)$. If $v$ is transcendental then   $K''\subset K''(x)$ is the extension constructed in \cite[Theorem 2]{Kap} for $v$. By Lemma \ref{p1} we see that $v $ has a pseudo limit $z$ in ${\bar V}''$.  Actually, the proof of Lemma \ref{l1} gives that $z$ could be taken in the completion  ${\hat V}''$ of $V''$. 
Then  the unicity given by \cite[Theorem 2]{Kap} shows that $K''(x)\cong K''(z)$ and so the extension $V''\subset {\bar V}$ factors through $V_1=V'\cap K''(x)$ because
 $z\in {\hat V}''\subset V'\subset {\bar V}$ since $V'$ is complete.
If $v$ is fundamental then as above $x\in {\hat V}''\subset {\bar V}$. In both cases the extension $V''\subset {\bar V}$ factors through $V_1=V'\cap K(x)$. These
 contradict that $V''$ is maximal by inclusion, that is  $V''$ must be $V'$.

 Now suppose  $V\subset V'$ is separable and  $(2)$ holds.
 We reduce to the case when the fraction field extension $K\subset K'$ of $V\subset V'$ is of finite type because $V'$ is a filtered direct union of $V'\cap L$ for some subfields $L\subset K'$, which are finite type field extensions of $K$. By induction on the number of generators of $L$ over $K$ we may reduce to the case when $K'=K''(x)$ for some element $x\in V'$, $K''$ being the fraction field  of a valuation subring $V''\subset V'$ as in $(2)$. As $K'/K$ is separable of finite type we may arrange that $K'/K''$ is still separable. Then $x$ is a pseudo limit of a pseudo convergent sequence $v$ from $V''$ which has no pseudo limit in $K''$ (see \cite[Theorem 1]{Kap}). Assume that $v$ is not a fundamental sequence. Then $v$ is transcendental by $(2)$  and so $V'$ is a filtered direct union of localizations of polynomial $V''$-subalgebras of $V'$ in one variable by \cite[Theorem 3.2]{Po} (see also \cite[Lemma 15]{P}). If $v$ is fundamental then $V''\subset V'$ is dense and separable and we may apply a theorem of type N\'eron-Schappacher (see e.g. \cite[Theorem 4.1]{Po0}). 
\hfill\ \end{proof}

\begin{Remark}\label{r2} If $V\subset V'$ is the  valuation ring extension given in \cite[Example 3.13]{Po0} then $V'$ is not a filtered direct limit of smooth $V$-algebras (see \cite[Remark 6.10]{Po0}) and so cannot be embedded in $\bar V$ by Theorem \ref{t}.
\end{Remark}
The following corollary is a kind of Artin approximation (see \cite{A}, \cite{P1}, \cite{H}, \cite{R}) in the frame of valuation rings. Its statement extends the idea of \cite[Corollary 8, Theorem 11]{KP} and \cite[Theorem 14]{PR} replacing the order by the valuation.
\begin{Corollary}\label{c1} Let $V \subset V'$ be  a separable immediate extension such that for any valuation subring  $V''\subset  V'$  such that $V\subset V''$ and $V''\subset V'$ are immediate extensions any algebraic pseudo convergent sequence of $V''$ which has a pseudo limit in $V'$ has one also in $V''$. Let $f$ be  a finite system of polynomials equations from $V[Y]$, $Y=(Y_1,\ldots,Y_n)$, which has a solution in $V'$. Assume    $V$ is complete and $\dim V=1$.
 Then $f$ has a solution in $V$. Moreover, if $y'=(y'_1,\ldots,y'_n)$ is a solution of $f$ in $V'$ then there exists a solution $y=(y_1,\ldots,y_n)$ of $f$ in $V$ such that $\val(y_i)=\val(y'_i)$ for $1\leq i\leq n$. 
\end{Corollary}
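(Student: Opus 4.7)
My plan is to combine the structural conclusion of Theorem~\ref{t} with a Henselian lifting argument analogous to Lemma~\ref{mb}. The pseudo-convergent condition in the corollary strengthens condition~(2) of Theorem~\ref{t} (it drops the ``not fundamental'' qualifier) and separability of $V\subset V'$ is given; so the ``moreover'' of Theorem~\ref{t} should apply to give that $V'$ is a filtered direct limit of smooth finitely generated $V$-algebras $A_\lambda$. Note that $V$ is Henselian, being complete of dimension one.

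Next I would enlarge $f$ so that solutions automatically have the prescribed valuations. Because $V\subset V'$ is immediate, $\val(y'_i)\in\Gamma$; choose $a_i\in V$ with $\val(a_i)=\val(y'_i)$. Let $g$ be the finite system obtained from $f$ by adjoining, in new variables $U_i,U'_i$, the equations $Y_i-a_iU_i$ and $U_iU'_i-1$ for $1\le i\le n$. Then $(y'_i,\,y'_i/a_i,\,a_i/y'_i)_i$ solves $g$ in $V'$, and any solution of $g$ in $V$ yields a solution $y$ of $f$ with $\val(y_i)=\val(a_i)=\val(y'_i)$, since the $U_i$ are forced to be units. Setting $B=V[Y,U,U']/(g)$, a finitely presented $V$-algebra with a $V$-morphism $B\to V'$, this morphism factors as $B\to A_\lambda\to V'$ for some $\lambda$. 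Composing $A_\lambda\to V'$ with the residue projection $V'\to k'=k$ yields a $V$-morphism $A_\lambda\to k$. Since $A_\lambda$ is smooth over the Henselian ring $V$, Hensel's lemma lifts this $k$-point to a $V$-morphism $A_\lambda\to V$; composing with $B\to A_\lambda$ gives the desired $V$-morphism $B\to V$.

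The main obstacle is the very first step: Theorem~\ref{t} is formally stated under the additional assumption that $V'$ is complete, whereas here only $V$ is complete. I expect the proof of the ``moreover'' implication to adapt without difficulty, since it reduces to a simple field extension $K''\subset K''(x)$ and splits into the transcendental case (using Popescu's \cite[Theorem~3.2]{Po}) and the fundamental case (using a N\'eron--Schappacher type theorem, \cite[Theorem~4.1]{Po0}), and neither of these uses completeness of $V'$. In the fundamental case one can pass to the Henselization of $V''$ inside $V'$ if needed. Once the smooth direct-limit structure is in hand, the Henselian lifting step is routine.
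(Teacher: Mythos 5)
Your proof is correct and coincides with the alternative route the paper itself records in a remark immediately after the corollary: ``Another proof of the above corollary could be done using that $V'$ is a filtered direct limit of smooth $V$-algebras (see Theorem~\ref{t}).'' The paper's actual proof is different: it invokes Theorem~\ref{t}, implication $(2)\Rightarrow(1)$, to factor $V\subset\bar V$ through $V'$, then applies Lemma~\ref{mb} (built on Moret-Bailly's extension of Greenberg's theorem) to lift the composite $B\to V'\to\bar V$ to a $V$-morphism $B\to\tilde V=\Pi_{\mathcal U}V$, and finally reads off a solution in $V$ from a representative of the ultraproduct solution. Your route instead lands directly on the ``moreover'' clause of Theorem~\ref{t} and needs only that $V$ is Henselian together with the standard fact that a smooth algebra over a Henselian local ring admits a section lifting any residue-field point; this avoids Lemma~\ref{mb} and the ultraproduct machinery entirely once the smooth direct-limit structure is granted. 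The valuation-matching step (adjoining the unit equations $Y_i-a_iU_i$, $U_iU_i'-1$) is the same in both proofs. Your worry about Theorem~\ref{t} being stated with $V'$ complete is legitimate and correctly resolved: the proof of the ``moreover'' half of that theorem never uses completeness of $V'$. It is worth noting that the paper's own proof appeals to $(2)\Rightarrow(1)$, whose argument \emph{does} use completeness of $V'$ (at the step ``$z\in\hat V''\subset V'\subset\bar V$ since $V'$ is complete''), so the paper implicitly carries the same wrinkle you flagged; your route sidesteps it more cleanly.
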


\begin{proof} Let $y'$ be a solution of $f$ in $V'$, $B=V[Y]/(f)$ and $w:B\to V'$ be the map given by $Y\to y'$. Let $\Gamma,{\mathcal U}, U, {\tilde V},{\bar V}$ be as in Theorem \ref{t}.   Then  the extension $V\subset {\bar V}$ factors through $V'$ and $\bar w$  the composite   map $B\xrightarrow{w} V'\to {\bar V}$ could be lifted to a map ${\tilde w}:B\to {\tilde V}$ by Lemma \ref{mb}. Thus $f$ has in ${\tilde V}$ the solution ${\tilde w}(Y)$ and so it has also a solution in $V$.

Now,  let $a=(a_1,\ldots,a_n)\in V^n$ be such that $\val(a_i)=\val(y'_i)$ for $1\leq i\leq n$. Then there exists an unit $z'_i\in V'$ such that $y'_i=a_iz'_i$ and the system $g$ obtained by adding to $f$ the equations $Y_i-a_iZ_i$, $ Z_iT_i-1$, $1\leq i\leq n$ has in $V'$ the solution $y'$, $z'=(z'_1,\ldots,z'_n)$, $t'=(t'_1,\ldots,t'_n)$, the last ones are given by the inverses of $(z'_i)$. So $g$ has a solution $y,z,t$ in $V$ and it follows that $\val(y_i)=\val(y'_i)$, $1\leq i\leq n$.  
\hfill\ \end{proof}
\begin{Remark} Another proof of the above corollary could be done using that  $V'$ is a filtered direct limit of smooth $V$-algebras (see Theorem \ref{t}).
\end{Remark}


\begin{thebibliography}{99}
\normalsize{
 \bibitem{A} M.\ Artin, {\em  Algebraic approximation of structures over complete local rings}. Pub. Math. Inst. Des.
Hautes. Scientif. {\bf 36(1)},(1969), 23–58.

\bibitem{ADH} M.\ Aschenbrenner, L.\ van den Dries, J.\ van der Hoeven, {\em Asymptotic differential algebra and model theory of
              transseries}, Annals of Mathematics Studies, {\bf 195}, Princeton University Press, Princeton, NJ, 2017.





\bibitem{BDLV} J.\ Becker, J. \ Denef, L.\ Lipshitz, L.\ van der Dries, {\em Ultraproducts and approximation in local rings I}, Inventiones Math., {\bf 51}, (1979), 189-203.

\bibitem{CK} C.\ C.\ Chang, H.\ J.\ Keisler, {\em Model theory}, 3rd ed., Studies in Logic and the Foundations of Mathematics,
vol. 73, North-Holland Publishing Co., Amsterdam, 1990.  

\bibitem{H} H.\ Hauser, {\em The classical Artin approximation theorems}, Bull. Amer.
Math. Soc., {\bf 54}, (2017), 595-633; 

\bibitem{Kap} I.\ Kaplansky, {\em Maximal fields with valuations}, Duke Math. J. {\bf9} (1942), 303-321.

\bibitem{KP} Z.\ Kosar, D.\ Popescu, {\em Nested Artin Strong Approximation Property},  J. of Pure Algebra and Applications, {\bf 222 (4)}, (2018), 818-827.

\bibitem{MB} L.\ Moret-Bailly, {\em An extension of Greenberg's theorem to general valuation rings}, Manuscripta Math.
{\bf 139} (2012), no. 1-2, 153-166.


\bibitem{O} A.\  Ostrowski, {\em Untersuchungen zur arithmetischen Theorie der K\"orper}, Math. Z. {\bf39} (1935), no. 1,
321-404.
\bibitem{P0}  D.\ Popescu, {\em Algebraically pure morphisms}, Rev. Roum. Math. Pures et Appl., {\bf 24}, (1979), 947-977.

 \bibitem{Po} D.\ Popescu, {\em On Zariski's uniformization theorem}, in Algebraic geometry, Bucharest 1982 (Bucharest,
1982), Lecture Notes in Math., vol. 1056, Springer, Berlin, 1984,  264-296.

\bibitem{Po0} D.\ Popescu, {\em Algebraic extensions of valued fields}, J. Algebra {\bf 108}, (1987),
513-533.
 
 \bibitem{P1}  D.\ Popescu, {\em   Artin approximation}, in Handbook of Algebra, vol. {\bf 2}, Ed. M.Hazewinkel,
(2000), Elsevier Science, 321-356.
 
  \bibitem{P} D.\ Popescu, {\em N\'eron desingularization of extensions of valuation rings with an Appendix by K\k{e}stutis \v{C}esnavi\v{c}ius},  to appear 
in Proceedings of Transient Transcendence in Transylvania 2019, Eds. Alin Bostan, Kilian
Raschel (possible in Springer Collection PROMS),
  arxiv/AC:1910.09123v4.
 
\bibitem{PR} D.\ Popescu, G.\ Rond, {\em  Remarks on Artin Approximation with constraints}, Osaka J. Math. {\bf 56},
(2019), 431–440. 

\bibitem{R} G.\ Rond, {\em Artin Approximation}, J. of Singularities, {\bf 17}, (2018), 108-192.

\bibitem{Sch} O.\ F.\ G.\ Schilling, {\em The theory of valuations}, Mathematical Surveys, Number IV, American Math. Soc., (1950).

\bibitem{Scho} H.\ Schoutens, {\em The use of Ultraproducts in Commutative Algebra}, Lect. Notes in Math., Springer, 1999.
}
\end{thebibliography}
\end{document}